 \DeclareMathOperator{\gm}{\mathbb G_m}
\DeclareMathOperator{\del}{\partial}
 \DeclareMathOperator{\Hom}{Hom}
\def\refp #1.{(\ref{#1})}
\newcommand{\A}{\mathcal{A}}
\newcommand{\kk}{\mathbf{k}}
\def\sbr #1.{^{[#1]}}
\def\sfl #1.{^{\lfloor #1\rfloor}}
\def\?{{\bf{??}}}
\def\A{\Bbb A}
\def\P{\mathbb P}
\def\O{\mathcal O}
\def\g{\mathfrak g}
\def\1/2{\frac{1}{2}}
\def\2{{[2]}}
\def\l{\ell}
\def\nl{\newline}
\def\<{\langle}
\def\>{\rangle}
\def\2{{[2]}}
\def\l{\ell}
\def\scl #1.{^{\lceil#1\rceil}}
\def\spr #1.{^{(#1)}}
\def\sbc #1.{^{\{#1\}}}
\def\subpr#1.{_{(#1)}}
\def\beq{\begin{equation*}}
\def\eeq{\end{equation*}}
\def\g3{{\Gamma\spr 3.}}
\newcommand{\eqspl}[2]{
\begin{equation}\label{#1}
\begin{split}
#2\end{split}\end{equation}}
\newcommand{\eqsp}[1]{\begin{equation*}
\begin{split}#1\end{split}\end{equation*}}
\newcommand{\exseq}[3]{
0\to #1\to #2\to #3\to 0
}
\newcommand{\beginalphaenum}{
\begin{enumerate}\renewcommand{\labelenumi}{ }
\item \begin{enumerate}
}
\def\eex{\end{rm}\end{example}}
\newtheorem{thm}{Theorem} 
\newtheorem*{thm*}{Theorem}
\newtheorem*{prop*}{Proposition}
\newtheorem{cor}[thm]{Corollary}
\newtheorem*{cor*}{Corollary}
\newtheorem{lem*}{Lemma}
\newtheorem*{claim*}{Claim}
\newtheorem{prop}[thm]{Proposition}
\theoremstyle{remark}
\newtheorem{rem}[thm]{Remark}
\newtheorem{crit-rem}[thm]{Critical remark}
\newtheorem{remarks}[thm]{Remarks}
\newtheorem{example}[thm]{Example}
\newtheorem*{example*}{Example}
\newtheorem*{defn*}{Definition}
\def\fra{{\mathfrak a}}
\def\kk{\underline k}
\def\gm{\mathbb G_{\mathrm m}}
\begin{document} 
\title{Incidence stratifications on Hilbert schemes of smooth surfaces, and an application to Poisson structures}
\author 
{Ziv Ran}



\thanks{arxiv.org/1502.00553}
\date {\today}


\address {\nl UC Math Dept. \nl
Big Springs Road Surge Facility
\nl
Riverside CA 92521 US\nl 
ziv.ran @  ucr.edu\nl
\url{http://math.ucr.edu/~ziv/}
}

 \subjclass[2010]{14C05, 14J99, 32J15, 53D17}
\keywords{Complex surface, Hilbert scheme, stratification, normal crossings, Poisson structure, 
}

\begin{abstract}
Given a smooth curve on a smooth surface,  the Hilbert scheme of points on the surface 
is stratified according to the length of the intersection with the curve.
The strata are highly singular.
We show that this stratification admits a natural
log-resolution, namely the stratified blowup. As a consequence, the induced Poisson
structure on the Hilbert scheme of a Poisson surface has unobstructed deformations.
\end{abstract}
\maketitle
One of the important and well-studied geometric objects associated to a smooth
surface $X$ is the Hilbert scheme $X\sbr\l.$, parametrizing  0-dimensional
subschemes of length $\l$ on $X$.
This is a smooth $2\l$-dimensional variety, which inherits various aspects of
the geometry of $X$, e.g. a symplectic structure \cite{bvl}.
See \cite{lehn-montreal}, \cite{sernesi} for information and references on Hilbert schemes.\par
Now suppose one is interested not in the 'plain' surface $X$ but rather in a pair
 $(X, Y)$, where $Y$ is a smooth curve on $X$.
 To this one can quite analogously associate a {stratification}, called
 an \emph{incidence stratification}
\[Y\spr \l.=I^\l_Y\subset I^{\l-1}_Y\subset...\subset I^1_Y\subset X\sbr\l.\]
where the closed stratum $I^j_Y$ denotes the locus of schemes intersecting $Y$ in length at least $j$.
Though natural enough, this 
stratification unfortunately seems to have somewhat complicated singularities
except for the bottom stratum $I^\l_Y$: e.g. for $\l=2$, $I^1_Y$
has Whitney-umbrella type singularities along $I^2_Y$. 
Things get still more complicated in a neighborhood of worse-behaved,
e.g. non-curvilinear schemes.
Thus, one is led to try
to resolve the singularities of this stratification in a simple and natural way.\par
Given that $I^\l=Y\spr \l.$ 
is smooth, the simplest potential way to resolve the singularities of the
incidence stratification
is by 'stratified blowup': i.e. blow up $I^\l_Y$, then blow up the proper transform of
$I^{\l-1}_Y$, etc. The purpose of this paper is to show that the stratified blowup indeed resolves the
singularities of the incidence stratification.\par
This question, of independent interest, first arose in connection with Poisson structures
on Hilbert schemes of projective Poisson (i.e. anticanonical) surfaces (see \cite{qsymplectic},
\cite{lagrangian}), and indeed our result has some applications to such structures  
and their deformations, see Corollary \ref{poisson}.\par
Here we begin in \S\ref{uni} by proving an analogous stratification result for
loci of collections of univariate polynomials stratified by the number of their common zeros.
Then in \S\ref{monomial} we will prove the main result in a neighborhood of a \emph{monomial}
ideal. Finally in \S\ref{general} we will prove the general case by specializing a general ideal to a monomial one.
\par
In this paper we work over an algebraically closed field $\kk$ of arbitrary characteristic.\par
We heartily thank the referee for his careful reading of the paper and for many corrections and helpful comments.

\section{Univariate polynomials with many common zeros}\label{uni}
Our main theorem depends on a completely elementary result about a stratification
in certain spaces of polynomials in 1 variable, which corresponds to the number of 
common zeros of  polynomials.\par
Fix natural numbers  $m_1,...,m_n$ and set $m=m_1+...+m_n$. 
Consider the space $\A=\A(m_1,...,m_n)$
 of $2n$- tuples $(h_1,...,h_n, f_1, ...,f_n)$
of  polynomials (coefficients in $\kk$) of the form
\eqsp{
h_i&=x^{m_i}+\sum\limits_{j=0}^{m_i-1}b_{i,j}x^j,\\
f_i&=\sum\limits_{j=0}^{m_i-1}a_{i,j}x^j,\ \ \  i=1,...,n.
} This is clearly an affine space of dimension $2m$;
in fact, it can be identified with a linear space with origin $0=(x^{m_1},...,x^{m_n}, 0,...,0)$.
Set
\[h=\prod\limits_{i=1}^nh_i, p_i=f_i\prod\limits_{j\neq i}h_j.\]
Thus $h$ is monic of degree $m$ exactly and each $p_i$ is of degree at most $m-1$.
For each $k=m, m-1,...,1$, consider the locus $I^k\subset\A$
consisting of all $(h., f.)$ such that the ideal generated by $h, p_1,...,p_n$ has colength at least
$k$. Thus we have a chain of closed subschemes, i.e. a stratification
\[I^m\subset I^{m-1}\subset...\subset I^1\subset \A.\]
Consider the associated stratified blowup, i.e. the  blowup $\hat \A$ 
of $\A$ obtained by first blowing up $I^m$, then the proper transform of $I^{m-1}$,
etc.
\begin{prop}\label{nx-1var-prop}
Near the origin, for
 each $j=1,...,m$, the proper transform $\hat I^j$ of $I^j$ in $\hat A$
 is smooth and the total transform equals $\hat I^m+...+\hat I^j$
 and  is a divisor with normal crossings.

\end{prop}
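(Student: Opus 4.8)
The plan is to reduce the whole statement to explicit computations in the affine charts of the iterated blowup, organized by the observation that the stratification records the incidence of the zeros of the $f_i$ with the scheme $Z=\{h=0\}$. First I would reformulate the strata intrinsically: writing $\mathcal O_Z=\kk[x]/(h)$ and $\bar p_i$ for the image of $p_i$, one has $I^k=\{\dim_\kk \mathcal O_Z/(\bar p_1,\dots,\bar p_n)\ge k\}$, which is the determinantal locus $\{\operatorname{rank}\le m-k\}$ of the multiplication map $\bigoplus_i\mathcal O_Z\xrightarrow{(\bar p_i)}\mathcal O_Z$. The bottom stratum $I^m=\{f_1=\dots=f_n=0\}=\{a_{i,j}=0\}$ is then a linear subspace of codimension $m$, hence smooth, which is exactly what makes the stratified blowup reasonable to start. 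I would also record the torus symmetry: scaling $x\mapsto tx$ together with independent rescalings $f_i\mapsto\lambda_i f_i$ gives a $\gm\times(\gm)^n$-action fixing the origin and preserving every $I^k$; since the entire blowup tower is equivariant, it suffices to verify smoothness and normal crossings along the fibre over the origin and at the distinguished torus-fixed points of each chart.

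The heart of the argument is the induction that after blowing up $I^m,\dots,I^{j+1}$ the proper transform of $I^j$ is smooth and meets the accumulated exceptional divisors transversally. The model computation is $m=2,\ n=1$, where $I^1=\{\Res(h,f)=0\}=\{a_0^2-b_1a_0a_1+b_0a_1^2=0\}$ is singular exactly along $I^2=\{a_0=a_1=0\}$; blowing up $I^2$ and substituting $a_1=a_0u$ factors the resultant as $a_0^2(1-b_1u+b_0u^2)$, so the total transform is supported on $\hat I^2\cup\hat I^1$ with $\hat I^2=E_2=\{a_0=0\}$ and $\hat I^1=\{1-b_1u+b_0u^2=0\}$ smooth and transverse to $E_2$. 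Conceptually the new coordinate $u$ records the (inverse) zero of $f$, and once this zero is recorded the incidence condition ``a zero of $f$ is a zero of $h$'' becomes the transverse equation $h(\text{zero})=0$. The general inductive step is this phenomenon iterated: each blowup separates and records one more piece of the zero configuration of the $f_i$, so that in every chart the local equation of $I^j$ acquires the shape (monomial in the exceptional coordinates)$\times$(a factor cutting out a smooth incidence), yielding that $\pi^{-1}(I^j)=\hat I^j\cup\hat I^{j+1}\cup\dots\cup\hat I^m$ is a normal crossings divisor with smooth components.

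The main obstacle is that for $1<j<m$ the strata $I^j$ are genuinely singular and in general reducible: near the origin all zeros of $h$ and of the $f_i$ collide, and $I^j$ acquires ``diagonal'' components from colliding zeros of $h$ (already visible for $n=2,\ m_1=m_2=1$, where $I^1$ contains the diagonal $\{h_1=h_2\}$ in addition to $\{f_1=0\}$ and $\{f_2=0\}$) on top of the expected components coming from the zeros of the $f_i$. The technical core is therefore to produce, in each chart of the tower, explicit local coordinates — built from the successively recorded zeros of the $f_i$ and from the subresultant coefficients of $h$ against these zeros — in which all of these components become coordinate hyperplanes or transverse smooth hypersurfaces simultaneously. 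I expect the cleanest route to be an induction on $m$ in which the first blowup chart is used to split off one common zero, reducing the colength condition to the same condition for smaller $m$ (equivalently, peeling off one factor of $\mathcal O_Z$); controlling the interaction of this reduction with the $h$-part of the coordinates, so that the proper transforms of all remaining strata stay smooth and the exceptional divisors are added transversally, is the step I expect to require the most care.
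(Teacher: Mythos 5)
Your proposal has a genuine gap: its core inductive step is never carried out. The parts you do establish match the paper's opening moves --- $I^m=\{a_{i,j}=0\ \forall i,j\}$ is a linear space of codimension $m$, and your $m=2$, $n=1$ resultant computation is precisely the paper's first step read in the leading-coefficient chart, where the paper replaces the pair $(h_i,f_i)$ by $(f_i,\,f^1_i:=\mathrm{Rem}(h_i,f_i))$ and asserts that the proper transform of $I^{m-1}$ is cut out by the coefficients of $f^1_i$ together with those of the $f_k$, $k\neq i$, and then iterates this division-with-remainder through the whole tower. But where the paper iterates, your text substitutes statements of intent (``I expect the cleanest route to be\dots'', ``the step I expect to require the most care''): the construction, in every chart of every stage, of coordinates in which all components of all remaining strata become simultaneously normal crossings --- which is the entire content of the Proposition --- is conjectured, not performed. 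The torus-equivariance reduction to fixed points over the origin is legitimate but does no inductive work; after it you still face exactly the problem you started with.

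Moreover, the induction cannot close in the form you state it (``after blowing up $I^m,\dots,I^{j+1}$ the proper transform of $I^j$ is smooth and meets the accumulated exceptional divisors transversally''), because it is contradicted by the very obstacle you identify. In your own example $n=2$, $m_1=m_2=1$, one has, as sets, $I^1=\{a_1=0\}\cup\{a_2=0\}\cup\{b_1=b_2\}$ and $I^2=\{a_1=a_2=0\}$. Blowing up $I^2$ and working in the chart $a_2=ua_1$, the proper transform of $I^1$ is $\{u=0\}\cup\{b_1=b_2\}$: two smooth divisors meeting along $\{u=0,\ b_1=b_2\}$, which contains the point $a_1=u=b_1=b_2=0$ of the fiber over the origin. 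So $\hat I^1$ is \emph{not} smooth near the origin, and since $I^1$ is the last stratum and its proper transform is a Cartier divisor, the remaining step of the stratified blowup is an isomorphism and separates nothing; no choice of clever coordinates can change this. What survives (and what the application to Poisson structures actually uses) is the normal crossings property of the total transform, which does hold here since $\{a_1=0\}$, $\{u=0\}$, $\{b_1=b_2\}$ are mutually transverse; a correct argument would have to be organized around that weaker statement, and the ``subresultant coordinates'' meant to prove it are exactly what your proposal omits. It is worth saying that you have located a real soft spot: the paper's own chart equations $a_{k,j}=0\ (k\neq i)$, $\mathrm{Rem}(h_i,f_i)=0$ account only for the branches of $I^{m-1}$ on which the colength is produced by the $f$'s, not for branches coming from a common factor of distinct $h_k$'s, so the diagonal components fall outside the paper's written induction as well. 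But identifying the difficulty is not the same as resolving it, and your proposal resolves it nowhere.
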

\begin{proof}
To begin with, $I^m$ is defined by the vanishing of the $a_{i,j}, j=0,...,m_i-1, i=1,...,n$,
hence is smooth of codimension $m$. Hence the blowup $\A_1$ of $\A$ in $I^m$
is smooth, and is covered by open affines where some $a_{i,j}\neq 0$. Now on $\A_1$, 
the intersection of the exceptional divisor 
with the
proper
transform of $I^{m-1}$ is covered by open affines $U^1_i$ where $a_{i,m_i-1}\neq 0$
for some $i$, 
and on this open set, 
 the proper transform of $I^{m-1}$ is defined by the equations
\eqspl{vanish a}{a_{k, j}=0, \forall k\neq i, j=0,...,m_k-1,}
plus the equation
\eqspl{vanish f1}{f^1_{i}:=\mathrm{Rem}(h_i, f_i)=0}
where $\mathrm{Rem}(u,v)$ denotes remainder dividing $u$ by $v$;
thus in this case,
\[f_i^1=h_i-q_if_i, q_i:=\frac{1}{a_{i,m_i-1}}(x+b_{i,m_i-1}-a_{i,m_i-2}/a_{i,m_i-1}).\]
Note that the above formulas establish an isomorphism between the set of  pairs $(f_i, h_i) $ as above
(an open set  in an affine space) and the space of triples $(f_i^1, q_i, f_i)$ where 
\[\deg(f^1_i)\leq m_i-2, f_i=a_{i, m_i-1}x^{m_i-1}+\mathrm{(lower)},  q_i=\frac{1}{a_{i, m_i-1}}x+c; a_{i, m_i-1}\neq 0, 
c\in\kk,.\]
It follows in particular that the locus defined by $f^1_i=0$ is nonsingular of codimension $m_i-2$.
Write \[f^1_i=\sum\limits_{j=0}^{m^1_i-1} a^1_{i,j}x^j, m^1_i=m_i-1,\]
Also set 
\[f^1_{k}:= h_k, a^1_{k,j}=a_{k,j},  m^1_k=m_k, k\neq i.\]
Thus with \eqref{vanish a} and \eqref{vanish f1}
we get in all a total of $m-1=\sum\limits_{k=1}^n  m^1_k$ equations with independent differentials, 
namely
\[a^1_{k,j}=0, j=0,...,m_k^1-1, k=1,...,n,\]
defining the proper transform
of $I^{m-1}$, so this proper transform is smooth here and transverse
to the exceptional divisor, which has equation $a_{i, m_i-1}=0$.\par
Next we blow up the proper transform of $I^{m-1}$, thus obtaining $\A_2$
which is smooth, and has smooth exceptional divisor over $\A_1$.
Notice that the intersection of the exceptional divisor of $\A_2$ over $\A_1$
with the proper transform of $I^{m-2}$ is covered by open sets 
$U^1_iU^2_k$ where the leading ($m_i$-th) coefficient
of some $f_i$ and the leading  ($m^1_k$-th) coefficient
of some $f^1_k$ are both nonzero, $i,k=1,...,n$ distinct or not, and on $U^1_iU^2_k$ 
the proper
transform of $I^{m-2}$ is defined by the vanishing of (the coefficients of)
$f^2_k:=\mathrm{Rem}(h_k, f^1_k)$ (which is $m^2_k:=m^1_k-1$ many equations
with independent differentials), plus the vanishing
of $f^2_d:=f^1_d, \forall d\neq k$. Then we can continue in the same way.

\end{proof}
\section{Near a monomial ideal}\label{monomial}
In this section we will prove a special case of our main theorem
in a neighborhood of a monomial ideal cosupported at a point.\par
Let $(s.)=(s_1,...,s_n), (t.)=(t_1,...,t_n)$ be sequences of nonnegative integers. To these we associate
the planar ideal
\[\fra:=\fra(s., t.)=AG_0+ ...+AG_n,\ A:=\kk[x,y], \  G_i:=x^{t_1+...+t_{n-i}}y^{s_1+...+s_i},\ \  i=0,...,n.\]
We will assume this is cosupported at the origin, i.e. that $(\sum t_i)(\sum s_i)>0$. In this case
the ideal is of finite colength equal to
\[L=\sum\limits_{i+j\leq n+1} s_it_j.\]
As is often the case, $\fra$ can be usefully represented as a determinantal ideal, namely
as the ideal of  $n\times n$ minors of the $n\times(n+1)$ matrix $M=(m_{i,j}) $ where
\[m_{i,j}=\begin{cases} x^{t_i}, j=i;\\
y^{s_{n+1-i}}, j=i+1;\\
0, \mathrm{otherwise}.
\end{cases}
\]
Thus
\eqspl{}{M=\left [
\begin{matrix}
x^{t_1}&y^{s_n}&0&0&...&0\\
0&x^{t_2}&y^{s_{n-1}}&0&...&0\\
&&\vdots&&&\\
0&...&0&0&x^{t_n}&y^{s_1}
\end{matrix}
\right].
}
Thus $G_i=\det(M_i)$ where $M_i$ is the submatrix of $M$ obtained by deleting the $(n+1-i)$-th
column. Accordingly, $\fra$ admits the short resolution
\eqspl{resolution}{
0\to A^n\stackrel{^tM}{\to}A^{n+1}\stackrel{(G.)}{\to}\fra\to 0
}where $(G.)=(G_n, -G_{n-1},...,(-1)^nG_0)$.\par

Let $\mathcal H$ denote the Hilbert scheme 
of colength - $L$ ideals in $\kk[x,y]$, an open subset of the
Hilbert scheme of $\P^2$.
By Fogarty's theorem (see \cite{fogarty} or\cite{lehn-montreal} or \cite{sernesi}, 
Theorem 4.6.9, p.248), $\mathcal H$  is smooth at
the point corresponding to $\fra$, and has tangent space $\Hom(\fra, \kk[x,y]/\fra)$ so the latter
vector space has dimension $2L$. For a quick proof of Fogarty's theorem, use the exact sequence
\[\exseq{\fra}{\kk[x,y]}{\kk[x,y]/\fra}\]
to see that $\mathrm{Ext}^1(\fra, \kk[x,y]/\fra)\simeq \mathrm{Ext}^2(\kk[x,y]/\fra, \kk[x,y]/\fra)$, which is 
Serre dual to \nl $\mathrm{Hom}(\kk[x,y]/\fra, \kk[x,y]/\fra)$, hence is $L$-dimensional, and
morevoer that 
\[\chi(\fra, \kk[x,y]/\fra)=\dim(\Hom(\fra, \kk[x,y]/\fra))-\dim(\mathrm{Ext}^1(\fra, \kk[x,y]/\fra))\]
is locally constant on $\mathcal H$, being an Euler characteristic.\par
Now as is well known, local
deformations of $\fra$ are obtained by deforming
the matrix $M$, i.e. replacing $M$ by $\tilde M=M+N$, where $N$ can be taken with
coefficients in $(A/\fra)\otimes \mathfrak n$, where $S=\kk\oplus\mathfrak n$ is a local $\kk$-algebra.
In the case of infinitesimal deformations, $S$ is artinian, i.e.  finite-dimensional.
This well-known fact can be proved as follows. Given an $S$-flat ideal $\tilde\fra<A\otimes S$, lifting the
$G$ generators yields a map $\tilde G:A^{n+1}\otimes S\to\tilde\fra$. The kernel $\tilde K=\ker(\tilde G)$
is also $S$-flat and has $\tilde K\otimes(S/\mathfrak n)=\ker (G.)\simeq A^n$. Therefore $\tilde K$
itself is free so $\tilde K\simeq A^n\otimes S$.
\par
Let $t=t_1+...+t_n$ and note that $\fra.\kk[x]=\fra/\fra\cap (y)$ is an ideal of colength $t$.
For $k=1,...,t$ let $I^k_\fra$ denote the subscheme of $\mathcal H_\fra$, the germ of $\mathcal H$ at
$\fra$, consisting of deformations of $\fra$ whose image in $\kk[x]$ is of colength
at least $k$; the scheme structure on $I^k_\fra$ can be defined via a suitable Fitting ideal
associated to the restriction of \eqref{resolution} on the $x$-axis. Thus, we have a stratification by
closed subschemes
\eqspl{}{
I^t_\fra\subset I^{t-1}_\fra\subset...\subset I^1_\fra\subset \mathcal H_\fra.
}
Let $\hat{\mathcal H_\fra}$ denote the corresponding stratified blowup.
\begin{prop}\label{monomial-prop}
For each $k=t,...,1$, the proper transform $\hat I^k_\fra$ 
of $I^k_\fra$ in $\hat{\mathcal H_\fra}$ is smooth and the total
transform equals $\hat I^t_\fra+...+\hat I^k_\fra$ and is a divisor with  normal crossings.
\end{prop}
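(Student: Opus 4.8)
The plan is to reduce this to the univariate result, Proposition \ref{nx-1var-prop}, by restricting everything to the $x$-axis. Using the presentation $0\to A^n\xrightarrow{{}^{t}M}A^{n+1}\xrightarrow{(G.)}\fra\to 0$ and the fact, recalled above, that every (infinitesimal) deformation of $\fra$ is realized as $\tilde M=M+N$ with $N$ having entries in $(A/\fra)\otimes\mathfrak n$, I would first set up a restriction morphism $\rho\colon\mathcal H_\fra\to\A(t_1,\dots,t_n)$. Reducing modulo $y$ sends $A/\fra$ onto $\kk[x]/(x^{t})$ and carries the deformed maximal minors $\tilde G_0,\dots,\tilde G_n$ to univariate polynomials; since the image of $\tilde\fra$ in $\kk[x]$ is generated by exactly these restricted minors, and the strata $I^k_\fra$ are cut out by the Fitting ideals of \eqref{resolution} restricted to $y=0$, one gets tautologically $I^k_\fra=\rho^{-1}(I^k)$ as closed subschemes.

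The computational heart is to identify the restricted minors with the configuration $(h,p_1,\dots,p_n)$ of \S\ref{uni}. Restriction of $M$ to $y=0$ is $[\mathrm{diag}(x^{t_1},\dots,x^{t_n})\mid 0]$, so $\tilde M|_{y=0}=[B\mid v]$ where $B$ deforms $\mathrm{diag}(x^{t_i})$ and $v$ deforms the zero column. Deleting the last column produces $\det B$, a monic polynomial of degree $t$ in the role of $h$, while deleting the $j$-th column produces $\det B^{(j)}$, which expands as the $j$-th entry of $v$ times the complementary cofactor and plays the role of $p_j=f_j\prod_{i\ne j}h_i$; this is exactly the identification (verified by a Laplace expansion) of $(h,p_\bullet)$ with the maximal minors of $[\mathrm{diag}(h_i)\mid f]$. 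Since the ideal of maximal minors, and hence the colength stratification, is unchanged by the $\GL_n(\kk[x])$-action on rows and on the first $n$ columns, $\rho$ lands in $\A(t_1,\dots,t_n)$ with $m_i=t_i$, $m=t$, carrying $I^t_\fra\subset\cdots\subset I^1_\fra$ onto $I^t\subset\cdots\subset I^1$.

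Next I would prove that $\rho$ is smooth near the origin. On tangent spaces, the differential is surjective: the deformation parameters split into those supported on the $x$-axis, which map isomorphically onto $T_0\A$, and the transverse ``$y$-direction'' parameters, which leave the restriction $\tilde M|_{y=0}$ unchanged and hence span the kernel; the dimension count $\dim\mathcal H_\fra=2L\ge 2t=\dim\A$ (as $L=\sum_{i+j\le n+1}s_it_j\ge s_1 t$) shows the fibre has the expected dimension $2(L-t)$. Granting smoothness, flat base change commutes with blowing up and with passing to strict transforms, so the stratified blowup is formed stage by stage as a fibre product: $\hat{\mathcal H_\fra}=\mathcal H_\fra\times_{\A}\hat\A$, each proper transform $\hat I^k_\fra$ being the $\rho$-preimage of the corresponding transform in $\hat\A$. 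Because smoothness and the normal-crossings property are preserved under smooth pullback, Proposition \ref{nx-1var-prop} then yields at once that each $\hat I^k_\fra$ is smooth and that the total transform $\hat I^t_\fra+\cdots+\hat I^k_\fra$ is a divisor with normal crossings.

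The main obstacle is the smoothness of $\rho$ together with the compatibility of the whole blowup tower with base change. Two points require care: exhibiting honestly the splitting of deformation parameters so that $d\rho$ is surjective (rather than merely knowing the scheme-theoretic preimage equality $I^k_\fra=\rho^{-1}(I^k)$), and checking that every successive blown-up center stays flat over $\A$, so that strict transforms beyond the first one also pull back correctly. Once $\rho$ is shown to be smooth, the remainder is formal.
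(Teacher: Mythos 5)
Your proposal follows essentially the same route as the paper: both reduce to Proposition \ref{nx-1var-prop} by restricting the determinantal presentation to the $x$-axis, with the diagonal and last-column deformation parameters $b_{i,j}, a_{i,j}$ supplying the $2t$-dimensional slice that maps isomorphically onto $\A(t_1,\dots,t_n)$ and carries the strata $I^k_\fra$ to the strata $I^k$. The paper's own proof is terser --- it exhibits this slice, records the identification $h=\prod h_i$, $p_i=f_i\prod_{j\neq i}h_j$ of the restricted minors, and invokes Proposition \ref{nx-1var-prop} --- while you spell out the restriction morphism $\rho$, its smoothness, and the compatibility of the blowup tower with smooth base change, which is precisely the content the paper leaves implicit.
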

\begin{proof}
Write the main  diagonal and last column elements of $\tilde M$ as
\eqspl{}{
(\tilde M)_{i,i}=h_i=x^{t_i}+\sum b_{i,j}x^j, &(\tilde M_{i, n+1)}=f_i=\sum a_{i,j}x^j, i<n,\\
&(\tilde M_{n, n+1)}=y^{s_1}+f_n=y^{s_1}+\sum a_{n,j}x^j.
} Note that the deformations corresponding to $a_{i,j}x^j, b_{i,j}x^j$ are linearly independent
and together these yield a $2t$-dimensional subvariety of $\hat{\mathcal H_\fra}$,
which maps isomorphically to the space $\A$ considered in the previous sections.
Moreover, given a deformation of $\fra$ corresponding to $\tilde M$,
its restriction on the $x$-axis is determined by 
\[h=\prod h_i\]
which is the deformation of $G_0$, and by 
\[p_i=f_i\prod\limits_{j\neq i}h_j,\]
which is the deformation of $G_{n+1-i}, i=1,...,n$. Consequently, our result follows from
Proposition \ref{nx-1var-prop}.
\end{proof}

\section{Incidence stratifications: general case}\label{general}
We are now ready to state and prove the main result. Thus, let $Y$ be a smooth closed curve on the smooth
algebraic surface $X$ over an algebraically closed field $\kk$.
For simplicity, we (needlessly) assume $X$ quasi-projective, but see the remarks 
following the proof.
Let $I^k_Y\subset X\sbr \l.$ denote the subscheme of the length-$\l$ Hilbert scheme
of $X$ consisting of schemes whose intersection with $Y$ is of length $k$ or more. $I^k_Y$ may be endowed
with a scheme structure as the image of a natural closed subscheme of the flag Hilbert scheme
$X{\sbr k,\l.}$ (see \cite{sernesi}) that is the pullback of the closed subscheme $Y\sbr  k.\subset X\sbr k.$ under
the natural map $X\sbr k,\l.\to X\sbr k.$. We thus have a closed stratification, called the \emph{incidence
stratification} associated to $Y$:
\eqspl{}{
Y\spr \l.=I^\l_Y\subset I^{\l-1}_Y\subset...\subset I^1_Y\subset X\sbr \l..
} It is easy to see that each closed stratum $I^k_Y$ has codimension $k$ in $X\sbr \l.$.
\begin{thm}\label{mainthm}
In the stratified blowup of the incidence stratification, the proper transform $\hat I^k_Y $ of each
closed stratum $I^k_Y$ is smooth and the total
transform equals $\hat I^k_Y+\hat I^{k+1}_Y+...+\hat I^t_Y$ and has normal crossings.
\end{thm}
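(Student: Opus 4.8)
The plan is to reduce the global, coordinate-free statement to the monomial computation of Proposition~\ref{monomial-prop} by a torus degeneration. Since smoothness and the normal-crossings property are \'etale-local, and the formation of the stratified blowup commutes with \'etale base change, it suffices to verify the conclusion in an \'etale (equivalently, formal) neighborhood of each point $[Z]\in X\sbr\l.$. First I would choose, on an affine open $V\subseteq X$ containing the finite set $\supp Z$, a function $y$ cutting out $Y\cap V$ with $dy$ nonvanishing along $Y$, together with a function $x$ whose restriction to $Y$ separates the points of $\supp Z\cap Y$ and with $dx,dy$ independent at each point of $\supp Z$; after shrinking, $(x,y)\colon V\to\mathbb A^2$ is \'etale and carries $Y$ into the axis $\{y=0\}$ with $\phi^{-1}(\{y=0\})=Y\cap V$. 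This identifies the germ of $X\sbr\l.$ at $[Z]$ with the germ of $\mathcal H:=\Hilb^{\l}(\mathbb A^2)$ at the corresponding colength-$\l$ ideal $\fra_0$, matching the incidence stratification with the one defined by the colength of the restriction to the $x$-axis (\'etale maps preserve the length of $Z\cap Y$). Thus it is enough to prove, for an arbitrary $\fra_0\in\mathcal H$, that the conclusion of the theorem holds over $\fra_0$.

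Now fix $\fra_0\in\mathcal H=\Hilb^{\l}(\mathbb A^2)$, a smooth quasi-projective variety. Every diagonal torus $t\cdot(x,y)=(t^{a}x,t^{b}y)$ acts on $\mathcal H$ preserving the $x$-axis, hence preserving each incidence stratum $I^k$ with its Fitting scheme structure. I would choose positive integers $a,b$ generic enough (avoiding the finitely many monomial weight-ratios relevant to $\fra_0$) that the initial ideal $\fra:=\lim_{t\to0}t\cdot\fra_0=\mathrm{in}_{(a,b)}(\fra_0)$ is monomial; this limit exists in $\mathcal H$ and is supported at the origin because the weights are positive. Every finite-colength monomial ideal in $\kk[x,y]$ has the staircase form $\fra(s.,t.)$ of \S\ref{monomial}, so Proposition~\ref{monomial-prop} applies to $\fra$. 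Since the stratified blowup $\pi\colon\hat{\mathcal H}\to\mathcal H$ is canonical (iterated blowup along the proper transforms), it is $\gm$-equivariant; let $U\subseteq\hat{\mathcal H}$ be the set of points admitting a neighborhood on which every $\hat I^k$ is smooth, the $\hat I^k$ cross normally, and the total transform of each $I^k$ equals $\hat I^k+\dots+\hat I^t$. Then $U$ is open and, by equivariance, $\gm$-invariant, so $\hat{\mathcal H}\setminus U$ is closed and invariant; as $\pi$ is proper, $W:=\pi(\hat{\mathcal H}\setminus U)$ is a closed, $\gm$-invariant subset of $\mathcal H$. Proposition~\ref{monomial-prop} provides an open neighborhood of the fiber over $\fra$ lying in $U$, so $\fra\notin W$. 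If $\fra_0\in W$ then $\gm\cdot\fra_0\subseteq W$, hence $\overline{\gm\cdot\fra_0}\subseteq W$, and in particular $\fra=\lim_{t\to0}t\cdot\fra_0\in W$, a contradiction. Therefore $\fra_0\notin W$, i.e.\ the conclusion holds over $\fra_0$; as $[Z]$ was arbitrary, the theorem follows.

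The main obstacle is not the degeneration bookkeeping but making the equivariant reduction airtight: one must verify that the stratified blowup is genuinely functorial, so that it is simultaneously $\gm$-equivariant on $\mathcal H$ and compatible with the \'etale localization of the first step, and that Proposition~\ref{monomial-prop} certifies goodness on an honest Zariski (not merely formal) neighborhood of $\fra$, so that it really gives $\fra\notin W$. Packaging the good locus $U$ as ``points with a good neighborhood'' is what keeps it open despite the divisorial identity $\pi^{*}I^k=\hat I^k+\dots+\hat I^t$ being, on its face, neither open nor closed. A secondary point needing care is that the weights $(a,b)$ depend on $\fra_0$, which is harmless since the argument is applied one $\fra_0$ at a time. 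Once these are in place, all the computational content is carried by Propositions~\ref{nx-1var-prop} and~\ref{monomial-prop}.
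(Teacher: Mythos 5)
Your proposal is correct and takes essentially the same route as the paper: \'etale localization to $(\mathbb{A}^2,\{y=0\})$, a $\mathbb{G}_m$-degeneration of the given ideal to a monomial ideal, Proposition~\ref{monomial-prop} at the monomial limit, and an openness-plus-equivariance argument to carry the conclusion back along the orbit. The only substantive difference is the choice of one-parameter subgroup: the paper scales $y$ alone (using projectivity of the punctual Hilbert scheme for existence of the limit, after first reducing to punctual support, and a DVR argument to show the $y$-homogeneous limit is monomial), whereas you use generic positive weights so that the flat limit is the initial ideal, monomial by Gr\"obner genericity --- an equally valid variant.
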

\begin{proof}
The statement is local near a given point $z\in X\sbr \l.$. Further, because of the usual \'etale, or analytic product
decomposition of the Hilbert scheme corresponding to the support of $z$, we may
assume $z$ is supported in a single point. Thus, we may work locally and assume $X$ is the plane $\A^2$ and $Y$
is the $x$-axis with ideal $(y)$ and $z$ is supported at the origin. Note that the 'punctual' Hilbert scheme
$X\sbr \l._0$ consisting of subschemes supported at the origin is projective. Consider the action of 
the multiplicative group $\gm$
on $X\sbr \l.$ and $X\sbr \l._0$ induced by the action on $X$ given by $y\mapsto \lambda.y$ (fixing $x$). 
Viewing $\gm$ as subset of $\A^1$, let
\[ z_0=\lim_{\stackrel{\lambda\in\gm}{\lambda\to 0}}\lambda^*z\in X\sbr\l._0.\]
In other words, the map
\[f_0:\gm\to X\sbr\l._0, f_0(\lambda)=\lambda^*z,\]
extends by projectivity to a map $f:\A^1\to X\sbr\l._0$ and $z_0=f(0).$
Then $z_0$ is $\gm$-invariant, i.e.  a homogeneous ideal with respect to $y$. 
Let $R=\O_{Y,0}$, which is  DVR with parameter $x$. Then the ideal $z_0R[y]$
in $R[y]$
is homogeneous as well with respect to $y$, hence is generated by finitely
many homogeneous elements of the form $ay^s, s\geq 0, a\in R$. Adjusting by
a unit, we may assume $a=x^r, r\geq 0$. Because $z_0=(z_0R[y])\cap \kk[x,y]$, it too is
generated by such elements, i.e. $z_0$ is
in fact a monomial ideal. By Proposition
 \ref{monomial-prop}, the Theorem holds in a neighborhood of $z_0$.
Moreover, every neighborhood of $z_0$ in $X\sbr\l.$ contains
schemes equivalent to $z$. Therefore the theorem holds locally near these as well, hence 
locally near $z$.
\end{proof}
\begin{remarks}
(i) Because Theorem \ref{mainthm} is local in nature, it actually holds without any quasi-projectivity hypotheses on $X$.
First, the Hilbert scheme $X\sbr \l.$ exists as a scheme (for any algebraic scheme $X$), 
and can be constructed as a projective morphism (cycle map)
over the symmetric product $X\spr\l.$, which itself can be covered by patches which are 
cartesian products of symmetric products of
quasi-projective schemes. 
See for instance \cite{structure}, \S1.
Second, the scheme structure on $I^k_Y$ can be constructed patch-wise
from the quasi-projective case. Lastly, the main argument of the proof is local around a scheme
supported on a single point, so certainly carries over.

\par
(ii) The obvious analogue of Theorem \ref{mainthm} in the complex-analytic category holds with the same proof, mutatis
mutandis.
\end{remarks}
As explained in \cite{qsymplectic} and in \cite{lagrangian}, Example 4.3, Theorem \ref{mainthm} has an
application to the deformation theory of induced 
Poisson structures on Hilbert schemes of Poisson
surfaces
(which in turn is an analogue of a result of Voisin \cite{voisin-lagrangian} in
the case of symplectic structures):
\begin{cor}\label{poisson}
Let $\Pi$ be a Poisson structure on a smooth complex projective surface $S$, corresponding to a
smooth anticanonical curve, and let  $\Pi\sbr r.$ be the associated Poisson structure
on the Hilbert scheme $S\sbr r.$. Then the pair $(S\sbr r., \Pi\sbr r.)$  has unobstructed deformations.
\end{cor}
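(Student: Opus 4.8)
The plan is to recast unobstructedness as a Hodge-theoretic degeneration statement, with Theorem \ref{mainthm} supplying the log-resolution that makes the Hodge theory available. Write $M=S\sbr r.$ and let $\Pi\sbr r.$ be the induced bivector. Holomorphic Poisson deformations of the pair $(M,\Pi\sbr r.)$ — deforming the complex structure and the bivector simultaneously — are governed by the Lichnerowicz--Poisson differential graded Lie algebra of polyvector fields $\bigoplus_{p,q}A^{0,q}(M,\wedge^p T_M)$ with differential $\bar\partial+[\Pi\sbr r.,-]$ and the Schouten bracket; its first and second cohomology are the infinitesimal deformations and the obstructions, so it suffices to prove this algebra is formal, i.e.\ to establish a $\partial\bar\partial$-type lemma for the associated complexes, exactly the Bogomolov--Tian--Todorov mechanism and following the framework of \cite{qsymplectic} and \cite{lagrangian}.

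The geometric input is that $\Pi$, whose zero locus is the smooth anticanonical curve $Y$, is log-symplectic along $Y$, and that the induced $\Pi\sbr r.$ is generically symplectic on $M$ and degenerates precisely along the incidence locus $I^1_Y$, its corank stratification being the incidence stratification $I^k_Y$. First I would pass to the stratified blowup $\hat M$ of this stratification. By Theorem \ref{mainthm} the total transform $\hat D$ of $I^1_Y$ has normal crossings and the proper transforms of the strata are smooth; since deepening the stratum by one raises the corank by $2$, this is exactly the behaviour of a normal-crossings log-symplectic structure, each local branch of $\hat D$ contributing one degenerate symplectic pair. Pulling $\Pi\sbr r.$ back to a bivector $\hat\Pi$ on $\hat M$, the degeneration is thus concentrated on $\hat D$ as honest logarithmic poles, and contraction with the resulting log-symplectic form identifies the holomorphic Poisson complex $(\wedge^\bullet T_{\hat M},[\hat\Pi,-])$ with the log de Rham complex $(\Omega^\bullet_{\hat M}(\log\hat D),d)$, whose hypercohomology computes the cohomology of the open symplectic leaf $U=M\setminus I^1_Y$ and carries Deligne's mixed Hodge structure.

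With this identification I would invoke the $E_1$-degeneration of the Hodge--de Rham spectral sequence of the log de Rham complex to obtain the $\partial\bar\partial$-lemma, hence formality of the Poisson algebra on $\hat M$; together with the $T^1$-lifting criterion — whose hypothesis, that the relevant relative cohomology be locally free in families, follows from the deformation-invariance of the log Hodge numbers — this gives unobstructedness upstairs. It then remains to descend along $\hat M\to M$, which is a Poisson morphism and an isomorphism over $U$, so that the two Poisson-deformation functors agree. I expect the main obstacle to be the two analytic points that the blowup is designed to enable: checking that $\Pi\sbr r.$ pulls back to a \emph{genuinely} log-symplectic structure, with simple logarithmic poles along each component of $\hat D$ rather than a higher-order degeneration, and then transferring formality and unobstructedness from $\hat M$ back down to $(M,\Pi\sbr r.)$.
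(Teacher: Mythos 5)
Your overall framework---pass to the stratified blowup, view the lifted structure as log-symplectic with normal-crossings degeneracy divisor, and run a Hodge-theoretic (log de Rham degeneration / BTT / $T^1$-lifting) argument---is indeed the mechanism that the paper relies on, but the paper does not prove that mechanism here: it quotes it wholesale from \cite{qsymplectic} and \cite{lagrangian}, Example 4.3. What the paper's proof of Corollary \ref{poisson} actually supplies is exactly the item you defer at the end as an ``obstacle'': the verification that $\Pi\sbr r.$ lifts \emph{holomorphically} to the stratified blowup $\hat S\sbr r.$, with the degeneration along $\tilde D=\hat I^r+\dots+\hat I^1$ being a genuine simple-pole (log) degeneration. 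Since your identification of the Poisson complex on $\hat M$ with $(\Omega^\bullet_{\hat M}(\log\hat D),d)$ presupposes precisely this, leaving it unproved is a genuine gap, not a technicality. The missing computation is short and you should include it: it suffices to check at a \emph{generic} point of each $\hat I^k$, and the proof of Theorem \ref{mainthm} shows such a point lies over a reduced subscheme, i.e.\ $r$ distinct points of $S$ of which exactly $k$ lie on the anticanonical curve $C=[\Pi]$. In local product coordinates there,
\[
\Pi\sbr r.=y_1\del x_1\wedge\del y_1+\dots+y_k\del x_k\wedge\del y_k+\del x_{k+1}\wedge\del y_{k+1}+\dots+\del x_r\wedge\del y_r,
\]
and on a typical chart of the blowup one has $y_i=u_iy_1$, $i=2,\dots,k$, so each $\del y_i$ acquires at most a first-order pole along $y_1=0$, which is cancelled by the coefficient $y_i$; hence the lift is holomorphic and the poles are simple, as required.

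A second, structural gap is your descent step. You propose to prove unobstructedness for $(\hat M,\hat\Pi)$ and then transfer it down along $\hat M\to M$ by asserting that ``the two Poisson-deformation functors agree.'' That comparison is not automatic and you give no argument: a (Poisson) deformation of the blowup need not preserve the exceptional divisors or contract back to a deformation of $M$, and a deformation of $M$ lifts to $\hat M$ only if the blowup centers (the strata $I^k_Y$) deform along with it; none of this is free. The paper's route avoids the issue entirely, because the unobstructedness theorem it cites is stated for the pair $(S\sbr r.,\Pi\sbr r.)$ itself, with the existence of a log-resolution (normal-crossings total transform of the Pfaffian divisor plus holomorphic lifting of $\Pi\sbr r.$) entering only as a \emph{hypothesis} of that criterion---the blowup is auxiliary input, not the space whose deformations are being computed. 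To make your version work you would either have to prove the functor equivalence or restructure the argument so that, as in loc.\ cit., the Hodge-theoretic machinery on $\hat M$ feeds directly into the deformation theory of $M$.
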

\begin{proof}
In light of the argument in loc. cit. this almost  follows from normal crossings of $\hat I^r+...+\hat I^1$,
which coincides with the inverse image $\tilde D$ of the Pfaffian divisor of $\Pi$ on the stratified blowup
$\hat S\sbr r.$.
The only missing point is that $\Pi$ lifts holomorphically to $\hat S\sbr r.$. This can be checked locally 
at a generic point of each $\hat I^k$. Now it is clear from our proof above that such
a generic point corresponds to a \emph{reduced} scheme, i.e. $r$ distinct points on $S$, of which
exactly $k$ are on $C=[\Pi]$. There, $\Pi\sbr r.$ can be written locally as
\[y_1\del x_1\wedge\del y_1+...+y_k\del x_k\wedge\del y_k+\del x_{k+1}\wedge\del y_{k+1}+...
+\del x_r\wedge \del y_r.\]
On a suitable (typical) open set in the blowup, we can write locally $y_i=u_iy_1, i=2,...,k$
so clearly $\del y_i$ has a pole of at most $y_1$, which is cancelled by $y_i$, so
$\Pi\sbr r.$ is holomorphic.
\end{proof}
\vfill\eject
\bibliographystyle{amsplain}
\bibliography{../mybib}

\providecommand{\bysame}{\leavevmode\hbox to3em{\hrulefill}\thinspace}
\providecommand{\MR}{\relax\ifhmode\unskip\space\fi MR }
\providecommand{\MRhref}[2]{%
  \href{http://www.ams.org/mathscinet-getitem?mr=#1}{#2}
}
\providecommand{\href}[2]{#2}
\begin{thebibliography}{1}

\bibitem{bvl}
A.~Beauville, \emph{Vari\'et\'es {K}{\"a}hl\'eriennes dont la premi\`ere classe
  de {C}hern est nulle}, J. Differential Geometry \textbf{18} (1983), 755--782.

\bibitem{fogarty}
J.~Fogarty, \emph{Algebraic families on an algebraic surface}, Amer. J. Math.
  \textbf{90} (1968), 511--521.

\bibitem{lehn-montreal}
M.~Lehn, \emph{Lectures on {H}ilbert schemes}, CRM notes, Centre de
  {R}echerches {M}ath\'ematiques, Montreal, 2004.

\bibitem{qsymplectic}
Z.~Ran, \emph{Deformations of holomorphic pseudo-symplectic {P}oisson
  manifolds}, \url{arxiv.org/1308.2442}.

\bibitem{structure}
\bysame, \emph{Structure of the cycle map for {H}ilbert schemes of families of
  nodal curves}, Israel J. Math., 1--34, (to appear), \url{http://arXiv.org/
  0903.3693}.

\bibitem{lagrangian}
\bysame, \emph{Deformations of log-{L}agrangian submanifolds of {P}oisson
  manifolds}, Int. Math. Research Notices (2014), \url{arxiv.org/1311.2656}.

\bibitem{sernesi}
E.~Sernesi, \emph{Deformations of algebraic schemes}, Grundl. d. math. Wiss.,
  vol. 334, Springer International, Berlin, Heidelberg, 2006.

\bibitem{voisin-lagrangian}
C.~Voisin, \emph{Sur la stabilit\'e des sous-vari\'et\'es lagrangiennes des
  vari\'et\'es symplectiques holomorphes}, Complex projective geometry
  (G.~Ellingsrud, C.~Peskine, G.~Sacchiero, and S.A. Stromme, eds.), Cambridge
  university press, 1992, pp.~294--303.

\end{thebibliography}
\end{document}